\tikzset{curve/.style={settings={#1},to path={(\tikztostart)
    .. controls ($(\tikztostart)!\pv{pos}!(\tikztotarget)!\pv{height}!270:(\tikztotarget)$)
    and ($(\tikztostart)!1-\pv{pos}!(\tikztotarget)!\pv{height}!270:(\tikztotarget)$)
    .. (\tikztotarget)\tikztonodes}},
    settings/.code={\tikzset{quiver/.cd,#1}
        \def\pv##1{\pgfkeysvalueof{/tikz/quiver/##1}}},
    quiver/.cd,pos/.initial=0.35,height/.initial=0}
\tikzset{tail reversed/.code={\pgfsetarrowsstart{tikzcd to}}}
\tikzset{2tail/.code={\pgfsetarrowsstart{Implies[reversed]}}}
\tikzset{2tail reversed/.code={\pgfsetarrowsstart{Implies}}}
\tikzset{no body/.style={/tikz/dash pattern=on 0 off 1mm}}
\newcommand{\PrConv}{\textsc{PrConv}}
\newcommand{\Conv}{\textsc{Conv}}
\newcommand{\Top}{\textsc{Top}}
\newcommand{\Lim}{\textsc{Lim}}
\newcommand{\Groupoid}{\textsc{Groupoid}}
\newcommand{\dom}[1]{\operatorname{dom}\left(#1\right)}
\newcommand{\apair}[1]{\left\langle #1\right\rangle}
\newcommand{\NETs}[1]{\textsc{NETS}\left(#1\right)}
\newcommand{\Fil}[1]{\textsc{Fil}^*\left(#1\right)}
\newcommand{\cont}[1]{\operatorname{C}\left(#1\right)}
\newcommand{\eva}{\operatorname{ev}}
\newcommand{\Id}[1]{\operatorname{Id}_{#1}}
\newcommand{\adh}[2]{\operatorname{adh}_{#1}\left(#2\right)}	
\newcommand{\inh}[2]{\operatorname{inh}_{#1}\left(#2\right)}
\newcommand{\colim}[1]{\operatorname{colim}\left(#1\right)}
\newtheorem{theorem}{Theorem}[section]
\newtheorem{lemma}[theorem]{Lemma}
\newtheorem{proposition}{Proposition}[section]
\theoremstyle{definition}
\newtheorem{example}[theorem]{Example}
\theoremstyle{remark}
\newtheorem{remark}[theorem]{Remark}
\numberwithin{equation}{section}
\begin{document}

\title{A net theoretic approach to homotopy theory}



\author{Renan M. Mezabarba}
\address{Departamento de Ciências Exatas, Universidade Estadual de Santa Cruz, Ilhéus, BA, 45662-900}
\curraddr{}
\email{rmmezabarba@uesc.br}
\thanks{}

\author{Rodrigo S. Monteiro}
\address{Departamento de Ciências Exatas, Universidade Estadual de Santa Cruz, Ilhéus, BA, 45662-900}
\curraddr{}
\email{rsmonteiro.bma@uesc.br}
\thanks{}

\author{Thales F. V. Paiva}
\address{Universidade Federal de Mato Grosso do Sul, Aquidauana, MS,79200-000}
\curraddr{}
\email{thales.paiva@ufms.br}
\thanks{}

\subjclass[2020]{54A05,54A20,55Q05}



\begin{abstract}
This paper uses a net-theoretic approach to convergence spaces, aimed to simplify the description of continuous convergence in order to apply it in problems concerning Homotopy Theory. We present methods for handling homotopies of limit spaces, define fundamental groupoids, and prove a generalized version of the Seifert-van Kampen Theorem for limit spaces.
\end{abstract}

\keywords{convergence spaces, nets, homotopy theory.}

\maketitle

\section*{Introduction}

The term ``convenient category of topological spaces'', introduced by Brown~\cite{Brown0} and popularized by Steenrod~\cite{Steenrod}, refers to any category $\mathcal{C}$ of topological spaces nice enough to address the needs of working algebraic topologists. In modern terminology~\cite{nlab:convenient}, this consists in asking for $\mathcal{C}$ to be a cartesian closed, complete and cocomplete full replete subcategory of \Top~containing CW-complexes among its objects. In this sense, the primary example is the category of compactly generated spaces, also known as $k$-spaces. However, if one drops the requirement of being a subcategory of \Top, in favor of something like the ``nice categories'' in~\cite{nlab:nice}, then we can discuss \emph{supercategories} of \Top~in which the lack of ``natural'' function spaces is corrected, such as the categories of \emph{convergence spaces}, the main setting of this paper.

Since Choquet~\cite{Choquet} introduced the convergence structures known as \emph{pseudotopologies} and \emph{pretopologies}, there has been extensive study on the subject. This has resulted in several papers and books addressing the topic. Notable textbooks include those by \v{C}ech~\cite{cech}, Binz~\cite{binz}, Gähler~\cite{gahler}, Schechter~\cite{schechter}, Beattie \& Butzmann~\cite{BB}, Preuss~\cite{preuss} and more recently, Nel~\cite{nel}, Dolecki \& Mynard~\cite{DM} and Dolecki~\cite{dol}.
Roughly speaking, a convergence space is a set equipped with a \emph{notion of convergence} for filters (or nets) defined on it. In other words, it provides a way to relate these objects to points in the space, which are called their limit points. Topological spaces are \emph{prima facie} convergence spaces, but the converse does not hold: as convergence structures are defined directly,  there are some cases in which the structure cannot be induced by a topology. Unlike topological spaces, many classes of convergence spaces have \emph{natural} function spaces, making the corresponding categories (theoretically) \emph{nice} for algebraic topology. However, this approach seems to have received little attention.

Indeed, a quick search on ZBMath for papers classified simultaneously under MSC55-XX (Algebraic Topology) and MSC54A20 (Generalities in General Topology regarding convergence) or MSC54A05 (Topological spaces and generalizations) returns fewer than forty matches, indicating the limited attention this topic has received in the literature\footnote{Among these matches, only five articles appear to be directly related to the subject: \cite{Demaria1984,Demaria19842} by Bogin and Demaria, \cite{Mielke} by Mielke,\cite{LeeMin} by Lee and Min, and\cite{Preusspaper} by Preuss.}. More recently, Dossena~\cite{dossena} and Rieser~\cite{rieserTop,riesernew} have worked explicitly with homotopy theory in the context of convergence spaces. However, in personal communications with researchers in Algebraic Topology, convergence spaces seem to be largely unknown\footnote{For instance: \url{https://math.stackexchange.com/q/4746423}.}. The authors believe this is due to the extensive use of filters in the subject, which might have contributed to its limited adoption and exploration in ``mainstream'' Algebraic Topology research.

Although filters provide a robust setting for developing convergence theory, the notation can be cumbersome and somewhat unintuitive for newcomers. The typical use of filters often involves basic set-theoretic operations on families of sets of sets (and so on), making it easy to lose track of computations and the geometric intuition. As an alternative, we propose the use of nets, which, while not a novel approach, offers a more intuitive framework. Indeed, several authors, such as \v{C}ech~\cite{cech}, Schechter~\cite{schechter}, Kelley~\cite{Kelley}, Kat\v{e}tov~\cite{Katetov}, Poppe~\cite{poppe}, and Pearson~\cite{Pearson1988}, have already discussed convergence spaces in terms of nets. More recently, O'Brien et al.~\cite{Obrien} carefully examined the topic, even addressing certain ZFC-related issues. However, the focus of these works is primarily on Analysis, not Algebraic Topology. Our goal is to extend this net-theoretic approach to convergence theory and explore its applications to problems in Algebraic Topology, with the beginnings of Homotopy Theory as our starting point.

The article is organized as follows. Sections 1 and 2 describe the basic setting of convergence spaces and nets used throughout the work. In Section 3, we discuss how to handle homotopies of limit spaces to define fundamental groupoids and fundamental groups, utilizing a Pasting Lemma for limit spaces -- which seems to be new in the literature. With these concepts, we prove the groupoid version of the Seifert-Van Kampen Theorem for limit spaces in Section 4. Finally, the fifth section provides further comments and questions, concluding the article.


\section{Nets (and filters) and convergence spaces}

\label{secnets}

In 1922, Moore and Smith~\cite{MooreSmith} introduced nets to generalize sequences, allowing for the handling of various types of limits in Analysis. In modern terminology, a \emph{net} in a set $X$ is a function $\varphi\colon\mathbb{D}\to X$, where $\mathbb{D}$ is a directed set: this means that $\mathbb{D}$ has a binary relation $\preceq$ which is both reflexive and transitive (i.e., it is a \emph{preorder}) such that for every $a,b\in\mathbb{D}$ there is $c\in\mathbb{D}$ for which $a,b\preceq c$. By a \emph{tail set} of $\varphi$ we mean a subset like $\varphi[a^{\uparrow}]$ for some $a\in\mathbb{D}$, where $a^{\uparrow}\coloneqq\{b\in\mathbb{D}:b\succeq a\}$, thus $\varphi[a^{\uparrow}]=\{\varphi(b):b\succeq a\}$.

For a topological space 
$\apair{X,\tau}$, we say that a net $\varphi$ $\tau$\emph{-converges} to a point $x\in X$ if every $\tau$-open set $O\subseteq X$ containing $x$ also contains a tail set of $\varphi$, i.e., there is $a\in\mathbb{D}$ such that $\varphi(b)\in O$ for every $b\succeq a$. We abbreviate this by writing $\varphi\to_\tau x$ or $\varphi(d)\to_\tau x$.
It is clear that sequences are specific cases of nets, and the definition of convergent sequences in Analysis is a particular instance of the general definition of net convergence. 

Although simple, the following well-known result helps to motivate the discussion that follows.

\begin{proposition}[Folklore]\label{trigger} For topological spaces $\apair{X,\tau}$ and $\apair{Y,\tau'}$, a function $f\colon X\to Y$ is continuous if and only if $f\circ \varphi$ $\tau'$-converges to $f(x)$ for every $x\in X$ and every net $\varphi$ in $X$ such that $\varphi\to_\tau x$.\end{proposition}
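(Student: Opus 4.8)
The plan is to prove the two implications separately: the forward direction is a routine unwinding of definitions, while the reverse direction is handled by contraposition together with a careful choice of indexing directed set.

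For the ``only if'' part, I would assume $f$ is continuous and fix $x\in X$ together with a net $\varphi\colon\mathbb{D}\to X$ satisfying $\varphi\to_\tau x$. Given any $\tau'$-open set $O$ with $f(x)\in O$, continuity makes $f^{-1}[O]$ a $\tau$-open set containing $x$, so convergence of $\varphi$ yields some $a\in\mathbb{D}$ with $\varphi[a^{\uparrow}]\subseteq f^{-1}[O]$, i.e.\ $(f\circ\varphi)[a^{\uparrow}]\subseteq O$. Since $O$ was an arbitrary $\tau'$-open neighborhood of $f(x)$, this shows $f\circ\varphi\to_{\tau'}f(x)$.

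For the ``if'' part I would argue contrapositively. If $f$ is not continuous, there is a $\tau'$-open set $O$ for which $f^{-1}[O]$ is not $\tau$-open; since a set is $\tau$-open exactly when it is a neighborhood of each of its points, there is a point $x\in f^{-1}[O]$ that is not interior to $f^{-1}[O]$, which means every $\tau$-open neighborhood $U$ of $x$ meets $f^{-1}[Y\setminus O]$. Now direct the collection $\mathcal{N}_x$ of $\tau$-open neighborhoods of $x$ by reverse inclusion (a directed set, since the intersection of two neighborhoods is again one), and use the axiom of choice to select, for each $U\in\mathcal{N}_x$, a point $\varphi(U)\in U\cap f^{-1}[Y\setminus O]$. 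The resulting net satisfies $\varphi\to_\tau x$, because any $\tau$-open $W$ with $x\in W$ already lies in $\mathcal{N}_x$ and every $U\preceq$-above $W$ (i.e.\ $U\subseteq W$) has $\varphi(U)\in W$; yet $f\circ\varphi$ takes all its values in $Y\setminus O$, hence cannot $\tau'$-converge to $f(x)$. Applying the hypothesis to the pair consisting of $x$ and this net gives a contradiction.

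The only genuine subtlety—and the step I expect to require the most care—is this reverse direction: one must pick the correct index set (the neighborhood filter at $x$ ordered by reverse inclusion), invoke choice to produce the witnessing net, and recall the elementary localization principle that the failure of openness of $f^{-1}[O]$ is detected at a single bad point $x$. The rest is bookkeeping with tail sets.
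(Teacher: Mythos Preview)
Your proposal is correct and follows essentially the same approach as the paper: the paper also proves the nontrivial direction by contraposition, producing the witnessing net indexed by $\tau_x=\{U\in\tau:x\in U\}$ under reverse inclusion and choosing $\varphi(U)\in U$ with $f(\varphi(U))\notin V$. The only difference is that the paper dismisses the forward implication as ``straightforward'' while you spell it out.
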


\begin{proof}
If $f$ is not continuous, then there is $V\in\tau'$ such that $f^{-1}[V]\notin \tau$, so there is $x\in f^{-1}[V]$ such that $f[U]\not\subseteq V$ for every $U\in\tau$ for which $x\in U$. By choosing $\varphi(U)\in U$ such that $f(\varphi(U))\notin V$ for all those $U$, we obtain a net $\varphi\colon \tau_x \to X$ such that $\varphi\to_\tau x$ and $f\circ \varphi\mathrel{\not\!\to_{\tau'}} f(x)$, where $\tau_x\coloneqq\{U\in\tau:x\in U\}$ is directed by reverse inclusion. The converse is straightforward.
\end{proof}

Proposition~\ref{trigger} shows that convergent nets in a topological space completely determine its topology. Therefore, a natural step towards generalization is to consider \emph{abstract convergences}, i.e., \emph{functions} of the form $\NETs{X}\to\wp(X)$, where $\NETs{X}$ represents the ``set'' of all nets in $X$ and $\wp(X)$ is the \emph{power set} of $X$. There is, however, a technical problem: if $X\ne \emptyset$, then $\NETs{X}$ is \emph{not} a set, but rather a \emph{proper class} (cf.~Jech~\cite[pp.5,6]{Jech2002} and Schechter~\cite[pp.169]{schechter}). This arises because it is relatively easy to ``direct'' a set\footnote{Every function $Y\to X$ can be regarded as a net, since $Y$ can be well ordered.}, so $\NETs{X}$ is too big to be an ordinary set.

To handle this, notice that the tail sets of the nets are what truly matter in the topological definition. So, for $\varphi,\psi\in\NETs{X}$, let us say that $\varphi$ and $\psi$ are \emph{tail equivalent} if every tail set of $\varphi$ contains a tail set of $\psi$ and vice versa. This relation, despite potential ZFC complications, defines an equivalence relation on $\NETs{X}$, where the equivalence classes correspond precisely to the filters on $X$.

Filters were introduced by Henri Cartan \cite{cartan1} in 1937 and popularized by Bourbaki. By a \emph{filter} on a set $X$ we mean a family $\mathcal{F}
$ of subsets of $X$ closed under finite intersections and closed upwards; it is proper if $\emptyset\notin\mathcal{F}$. We denote the family of proper filters in $X$ by $\Fil{X}$. 
For a topological space $\apair{X,\tau}$, the family $\mathcal{N}_x$ of $\tau$-neighborhoods of $x$ is a typical example of filter, which is used to define topological convergence in terms of filters: a proper filter $\mathcal{F}$ on $X$ is said to \emph{$\tau$-converges} to $x$ if $\mathcal{N}_x\subseteq \mathcal{F}$, what we abbreviate with $\mathcal{F}\to_\tau x$.

Both notions of convergence are related. First, notice that a net $\varphi$ in $X$ induces a filter $\varphi^{\uparrow}$ in a very natural way: we simply let $\varphi^{\uparrow}$ be the family of those subsets of $X$ containing a tail set of the net $\varphi$. This yields a correspondence
\[(\cdot)^{\uparrow}\colon \NETs{X}\to\Fil{X}\]
such that $\varphi\to_\tau x$ if and only if $\varphi^{\uparrow}\to_\tau x$. Notice that two nets are tail equivalent if and only if they induce the same filters.

On the other hand, if we could find a ``right inverse''
\[\Gamma\colon\textsc{Fil}^{*}(X)\to\NETs{X}\]
then we would have $\Gamma(\mathcal{F})\to_\tau x$ if and only if $\left(\Gamma(\mathcal{F})\right)^{\uparrow}=\mathcal{F}\to_\tau x$. There is, indeed, such a function, as shown by Bruns and Schmidt~\cite{BStrick}. Here is how it works. Given a filter $\mathcal{F}\in\Fil{X}$, define
$\mathbb{D}_{\mathcal{F}}\coloneqq\left\{\apair{x,F}\in X\times\mathcal{F}:x\in F\right\}$,
preordered by the relation $\leq$ such that $\apair{x,F} \leq \apair{y,G}$ if and only if $G \subseteq F$, which makes $\apair{\mathbb{D}_\mathcal{F}, \leq}$ a directed set. In this setup, the net $\Gamma(\mathcal{F})\colon\mathbb{D}_\mathcal{F}\to X$, defined by projection onto the first coordinate, has the members of $\mathcal{F}$ as its tail sets, ensuring that $\left(\Gamma(\mathcal{F})\right)^{\uparrow} = \mathcal{F}$.

The correspondences $(\cdot)^{\uparrow}$ and $\Gamma$ allow us to freely switch between nets and filters when discussing convergence in topological contexts. This flexibility extends to \emph{convergence spaces} as well. Typically, a \emph{convergence} on a set $X$ is defined as a function $L\colon \Fil{X} \to \wp(X)$ that satisfies certain additional conditions. However, using $(\cdot)^{\uparrow}$, we can extend $L$ to a ``function'' $L\colon \NETs{X} \to \wp(X)$ compatible with tail equivalence. Thus, the task becomes one of translating these additional conditions from the language of filters to that of nets.

In this work, a \emph{preconvergence} on a  set $X$ is a ``function'' $L:\NETs{X}\to\wp(X)$ such that $L(\varphi)=L(\psi)$ whenever $\varphi^{\uparrow}=\psi^{\uparrow}$. As usual, we write $\varphi\to_L x$ instead of $x\in L(\varphi)$, and say that $x$ is an \emph{$L$-limit} of $\varphi$, or that $\varphi$ \emph{$L$-converges to $x$}, etc. The pair $\apair{X,L}$ is referred to as a \emph{preconvergence space}\footnote{The definition presented here aligns with Schechter's \cite{schechter}, albeit Schechter refers to them as ``convergence spaces''. However, terminology in this area remains inconsistent: for instance, Dolecki \& Mynard~\cite{DM} use ``preconvergences'' for what we call ``isotone preconvergences''; similarly, Beattie \&\, Butzmann~\cite{BB} and Nel~\cite{nel} refer to ``convergence spaces'', which Dolecki names ``finitely stable spaces'', while we follow Preuss's \cite{preuss} terminology ``limit spaces''. At least our use of ``convergence spaces'' follows Dolecki \& Mynard's terminology.}.

\begin{remark}\label{cofinaltrick} As $\varphi^\uparrow = \left(\varphi(d):d\succeq D\right)^\uparrow$ for every $D\in\mathbb{D}$, we might always restrict a net to some of its tail sets without losing any of its limit points.\end{remark}

A function ${f}{\colon}X\to Y$ between preconvergence spaces $\apair{X,L}$ and $\apair{Y, L'}$ is \emph{continuous} if $f\left[L(\varphi)\right]\subseteq L'(f\circ \varphi)$ for every $\varphi\in\NETs{X}$, i.e., $f\circ \varphi\to_{L'}f(x)$ whenever $\varphi\to _L x$. We denote by $\cont{X,Y}$ the set of continuous functions from $X$ to $Y$. This data naturally defines a category, denoted by $\PrConv$.

Now, we say a preconvergence $L$ on $X$ is
\begin{enumerate}
\item \emph{centered} if constant nets $L$-converge to their corresponding constants,
\item \emph{isotone} if $L(\varphi)\subseteq L(\psi)$ whenever $\varphi^\uparrow\subseteq \psi^{\uparrow}$,
\item \emph{stable} if $L(\varphi)\cap L(\psi)\subseteq L(\rho)$,
\end{enumerate}
where $\varphi$ and $\psi$ are nets on $X$, and $\rho$ is a \emph{mixing} of $\varphi$ and $\psi$. The details of these conditions are briefly explained below.

\emph{Centerness} should be clear: since we want constant functions to be continuous, we require constant nets to converge. \emph{Isotonicity}, on the other hand, abstracts the behavior of subsequences in typical topological contexts. Notice that if $\apair{y_n}_n$ is a subsequence of $\apair{x_n}_n$, then $\apair{x_n}_n^\uparrow\subseteq \apair{y_n}_n^{\uparrow}$. Thus, in an isotone preconvergence, if $\apair{x_n}_n\to x$ and $\apair{y_n}_n$ is a subsequence of $\apair{x_n}_n$, we must have $\apair{y_n}_n\to x$ as well. With this in mind, we follow Schechter~\cite{schechter} an say that $\psi$ is a \emph{subnet} of $\varphi$ if $\varphi^\uparrow\subseteq \psi^{\uparrow}$. Isotonicity, therefore, ensures that convergence is preserved for subnets. As Dolecki and Mynard~\cite{DM}, we say $\apair{X,L}$ is a \emph{convergence space} if $L$ is both centered and isotone, and their category is denoted by $\Conv$.

Finally, \emph{stability} refers to the ability to mix two sequences converging to the same point to form a new sequence that also converges to that point. For instance, if $\apair{x_n}_n$ and $\apair{y_n}_n$ are sequences in a topological space converging to a point $z$, then $\apair{z_n}_n\to z$, where $z_n=x_n$ for even $n$ and $z_n=y_n$ for odd $n$.
Mixing of nets generalizes this situation. Given two nets $\varphi,\psi\in\NETs{X}$ on the same domain $\mathbb{D}$, a net $\rho\colon\mathbb{D}\to X$ is called a \emph{mixing} of $\varphi$ and $\psi$ if there exists $D\in\mathbb{D}$ such that $\rho(d)\in\{\varphi(d),\psi(d)\}$ for every $d\succeq D$. Stability ensures that if $\rho$ is a mixing of the nets $\varphi$ and $\psi$, both converging to $x$, then $\rho$ also converges to $x$. A convergence spaces $\apair{X,L}$ is called a \emph{limit space} if $L$ is stable, and the category of such spaces is denoted by $\Lim$.

It should be clear that $\Top\subsetneq \Lim\subsetneq\Conv\subsetneq \PrConv$. For examples illustrating the failure of the reverse inclusions, as well as a further discussion of ``smaller'' categories between $\Top$ and $\Lim$, we refer the reader to~\cite{dol} and~\cite{DM}.

\begin{remark} Stability is also called \emph{finite depth} in~\cite{DM}, which in filter terminology means that $L(\mathcal{F})\cap L(\mathcal{G})\subseteq L(\mathcal{F}\cap\mathcal{G})$ for every proper filters $\mathcal{F}$ and $\mathcal{G}$ on $X$. It is equivalent to the net version presented here if the preconvergence $L$ is isotone. We refer the reader to O'Brien et al.~\cite{Obrien} for a proof, as well as other notions of mixings that we will not use in this work.\end{remark}

\section{Convergential translations of topological notions}\label{section2}

Given preconvergences $L$ and $L'$ on a set $X$, we say $L'$ is \emph{finer/stronger} than $L$ (or $L$ is \emph{coarser/weaker} than $L'$), if $L'(\varphi) \subseteq L(\varphi)$ for every $\varphi\in\NETs{X}$, i.e, if $\varphi\to_{L'} x$ implies $\varphi\to_L x$ for every $\varphi\in\NETs{X}$ and $x\in X$, in which case we write $L\leq L'$.
This agrees with the usual topological terminologies: for topologies $\tau$ and $\tau'$ on a set $X$, $\tau'$ is \emph{finer} than $\tau$ (i.e., $\tau\subseteq \tau'$) if and only if the convergence $\to_{\tau'}$ is finer than the convergence $\to_{\tau}$. This framework allows for the consideration of suprema and infima of preconvergences on a set $X$, making it possible to perform typical topological constructions, such as products, subspaces, coproducts and quotients. We summarize these constructions below.\medskip

For a family $\mathfrak{F}$ of preconvergences on a set $X$, its infimum and supremum in the family of all preconvergences on $X$ are \emph{realized} by the preconvergences $\bigwedge\mathfrak{F}$ and $\bigvee\mathfrak{F}$, where
\[\bigwedge\mathfrak{F}(\varphi)\coloneqq\bigcup_{L\in\mathfrak{F}}L(\varphi)\quad\text{and}\quad\bigvee\mathfrak{F}(\varphi)\coloneqq\begin{cases}\displaystyle\bigcap_{L\in\mathfrak{F}}L(\varphi)&\text{ if }\mathfrak{F}\ne\emptyset,\\
X&\text{ otherwise,}\end{cases}\]
for every net $\varphi\in\NETs{X}$.\,More precisely, $\varphi\to_{\bigwedge\mathfrak{F}} x$ if and only if there is a preconvergence $L\in\mathfrak{F}$ such that $\varphi\to_L x$, while for $\mathfrak{F}\ne\emptyset$, $\varphi\to_{\bigvee \mathfrak{F}}x$ if and only if $\varphi\to_{L} x$ for every preconvergence $L\in\mathfrak{F}$.

Suprema and infima are then used to define \emph{initial} and \emph{final structures} in the standard way. In fact, since these constructions rely on the lattice structure of the family of preconvergences on a set rather than the filter description of the convergences, all the well known results regarding these constructions remain valid\footnote{Afterall, if anything, it is the description of convergence theory that is being changed, and not the theory itself.}. In particular, the next lemma is a small fragment of a deeper result relating the so-called \emph{modifiers} with the suprema of preconvergences in topological categories.

\begin{lemma}[cf.\,{\cite{preuss}}] Let $\mathfrak{F}\ne\emptyset$ be a family of preconvergences on a set $X$. If $\apair{X,L}\in\Lim$ for every $L\in\mathfrak{F}$, then $\apair{X,\bigvee\mathfrak{F}}\in\Lim$.\end{lemma}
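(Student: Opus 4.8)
The plan is to verify directly that $M\coloneqq\bigvee\mathfrak{F}$ satisfies the three defining properties of a limit space — \emph{centeredness}, \emph{isotonicity} and \emph{stability} — exploiting that, because $\mathfrak{F}\ne\emptyset$, we have $M(\varphi)=\bigcap_{L\in\mathfrak{F}}L(\varphi)$ for every $\varphi\in\NETs{X}$. Note that $M$ is already a preconvergence, since it realizes the supremum in the lattice of preconvergences on $X$; so these three checks are exactly what remains to be done to conclude $\apair{X,\bigvee\mathfrak{F}}\in\Lim$.

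For centeredness, fix $x\in X$ and let $c_x$ denote the constant net at $x$; since each $L\in\mathfrak{F}$ is centered, $x\in L(c_x)$, and intersecting over $\mathfrak{F}$ gives $x\in M(c_x)$. For isotonicity, suppose $\varphi^{\uparrow}\subseteq\psi^{\uparrow}$; then $L(\varphi)\subseteq L(\psi)$ for every $L\in\mathfrak{F}$ by isotonicity of $L$, and intersecting over $\mathfrak{F}$ preserves the inclusion, so $M(\varphi)\subseteq M(\psi)$. For stability, let $\varphi,\psi$ be nets on a common domain $\mathbb{D}$ and let $\rho$ be a mixing of them; if $x\in M(\varphi)\cap M(\psi)$, then for each $L\in\mathfrak{F}$ we have $x\in L(\varphi)\cap L(\psi)\subseteq L(\rho)$ by stability of $L$, hence $x\in\bigcap_{L\in\mathfrak{F}}L(\rho)=M(\rho)$.

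I do not expect a genuine obstacle here: each property of $M$ reduces to the same property of the individual $L$'s, the intersection over $\mathfrak{F}$ being compatible with all three. The only role of the hypothesis $\mathfrak{F}\ne\emptyset$ is to make the formula $M(\varphi)=\bigcap_{L}L(\varphi)$ available (rather than the degenerate value $X$) and to let this intersection interact correctly with the pointwise inclusions above. It is worth noting that the analogue for infima is false: $\bigwedge\mathfrak{F}(\varphi)=\bigcup_{L\in\mathfrak{F}}L(\varphi)$ remains centered and isotone, but the member of $\mathfrak{F}$ witnessing $x\in\bigwedge\mathfrak{F}(\varphi)$ and the one witnessing $x\in\bigwedge\mathfrak{F}(\psi)$ may differ, so stability can fail — which is precisely why the lemma is stated only for suprema.
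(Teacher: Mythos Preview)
Your proof is correct and follows essentially the same approach as the paper: verify centeredness, isotonicity and stability of $\bigvee\mathfrak{F}$ by reducing each to the corresponding property of every $L\in\mathfrak{F}$ and then intersecting. The paper writes out the first two and leaves stability to the reader, which you have supplied; your closing observation about infima also matches the paper's subsequent remark.
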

\begin{proof}
If $\varphi=\apair{x}_d$ is a constant net, then $\varphi\to_L x$ for every $L\in\mathfrak{F}$, hence $\varphi\to_{\bigvee\mathfrak{F}} x$. Assuming $\psi$ is a subnet of $\varphi$ such that $\varphi\to_{\bigvee\mathfrak{F}} x$, for every $L\in\mathfrak{F}$ we have $\varphi\to_{L}x$, then $\psi\to_L x$, thus implying $\psi\to_{\bigvee \mathfrak{F}} x$. The proof of stability is left to the reader.
\end{proof}

\begin{remark}
Infima do not behave as well, in the sense that $\bigwedge\mathfrak{F}$ might fail to be a limit convergence even if every $L\in\mathfrak{F}$ is \emph{topological} (i.e., induced by a topology on the set). For a fast example, let $X\coloneqq (0,1)\cup\{\bullet\}$ and take as $L$ and $L'$ the topological convergences on $X$ induced by the obvious bijections with $[0,1)$ and $(0,1]$, respectively: by choosing any sequences $\apair{x_n}_n$ and $\apair{y_n}_n$ on $(0,1)$ such that $x_n\to 0$ and $y_n\to 1$ in the usual topology of $\mathbb{R}$, it follows that $x_n\to_{L\wedge L'}\bullet$ and $y_n\to_{L\wedge L'}\bullet$, but the same mixing $\apair{z_n}_n$ considered in the previous section does not $L\wedge L'$-converge to $\bullet$, as it fails to converge to $\bullet$ in either $L$ or $L'$. This is usually handled by restricting the lattice in which the infimum is taken, which in practical terms mean to apply the natural functor $\Conv\to\Lim$ (cf.{\cite[Theorem 2.2.12]{preuss}}).
\end{remark}

For a family $\{\apair{X_i,L_i}:i\in\mathcal{I}\}$ of preconvergence spaces, the cartesian product $\prod_{i\in\mathcal{I}}X_i$ carries the \emph{product preconvergence}, in which a net $\varphi$ converges to $\apair{x_i}_{i\in\mathcal{I}}$ if and only if $\pi_i\circ \varphi\to_{L_i} x_i$ for each $i$. Similarly, a subset $X$ of a preconvergence space $\apair{Y,L}$ inherits the \emph{subspace preconvergence} $L|_X$, in which a net $\psi\in\NETs{X}$ $L|_X$-converges to $x\in X$ if and only if $\psi\to_L x$ in $Y$. By the previous lemma, it follows that $\prod_{i\in\mathcal{I}}X_i$ and $\apair{X,L|_X}$ are limit spaces provided all $\apair{X_i,L_i}$ and $\apair{Y,L}$ are limit spaces.

\emph{Coproducts} and \emph{quotients} are defined dually. For a family $\{\apair{X_i,L_i}:i\in\mathcal{I}\}$ of preconvergence spaces, their disjoint union $\coprod_{i\in\mathcal{I}}X_i$ carries the \emph{coproduct preconvergence}, in which a net $\varphi$ converges to a point $w\in\coprod_{i\in\mathcal{I}}X_i$ if and only if there exist $j\in\mathcal{I}$, $x\in X_j$ and a net $\psi\in\NETs{X_j}$ such that $\psi\to_{L_j}x$, $\iota_j(x)=w$ and $(\iota_j\circ \psi)^{\uparrow}\subseteq\varphi^{\uparrow}$ (here $\iota_j\colon X_j\to\coprod_{i\in\mathcal{I}}X_i$ is the canonical inclusion). Since the components $X_i$ are pairwise disjoint in the coproduct, it is straightforward to see that $\coprod_{i\in\mathcal{I}}X_i$ is a limit space provided that each $\apair{X_i,L_i}$ is a limit space.

On the other hand, for a limit space $\apair{X,L}$ and a surjective function $f\colon X\to Y$, the \emph{quotient limit convergence} on $Y$ induced by $f$ is defined such that a net $\varphi$ on $Y$ converges to $y$ if and only if there exist finitely many nets $\psi_0,\dotso,\psi_n$ on $X$ and points $x_0,\dotso,x_n\in X$ such that $\psi_i\to_L x_i$, $f(x_i)=y$ for every $i\leq n$ and $\bigcap_{i\leq n}(f\circ \psi_i)^{\uparrow}\subseteq \varphi^\uparrow$. The requirement for finitely many nets witnessing the convergence is done to ensure stability (cf.~\cite{BB,DM,preuss}).

All these constructions retain their standard universal properties, enabling further constructions like pushouts and pullbacks through the usual categorical \emph{recipes}. As such, we omit a detailed discussion here and instead refer the reader to \cite{preuss} or the more recent \cite{MS} for further guidance on these topics. We now turn our attention to translating other topological concepts.\medskip

Recall that a subset of a topological space is open if and only if it contains a tail set of every net converging to its points. With this in mind, we say a subset $O$ of a preconvergence space $\apair{X,L}$ is $L$-\emph{open} if for every $\varphi\in\NETs{X}$ for which $L(\varphi)\cap O\ne\emptyset$ there is $D\in\dom{\varphi}$ such that $\varphi[D^\uparrow]\subseteq O$. It is then not hard to see that $\tau_L\coloneqq\{O\subseteq X:O$ is $L$-open$\}$ is a topology on $X$. In fact, even more is true: $\tau_L$ is the strongest topology on $X$ such that the identity function $\apair{X,L}\to\apair{X,\to_{\tau_L}}$ is continuous and, by writing $\tau(X)$ to denote $\apair{X,\to_{\tau_L}}$, the correspondence $X\mapsto \tau(X)$ induces a functor\footnote{Notice that if $f\colon X\to Y$ is continuous and $O\subseteq \tau(Y)$ is open, then $f^{-1}[O]\subseteq \tau(X)$ is open, for if $\varphi\to x$ in $X$, then $f\circ\varphi\to f(x)$ in $Y$, implying that $O$ contains a tail set of $f\circ \varphi$, hence $f^{-1}[O]$ contains a tail set of $\varphi$ as well.} $\tau\colon \PrConv\to\Top$, usually called \emph{topological modification} \cite{BB} or \emph{topologizer}~\cite{dol,DM}. Closed sets are defined as the complements of open sets.

The same strategy of adaption is followed by Dolecki and Mynard~\cite{DM} to generalize the interior and closure operators in terms of filters, leading to the notions of \emph{inherences} and \emph{adherences} as their respective convergential counterparts. Under net terminology, the $L$-\emph{inherence} of $S\subseteq X$ is the set of points in $X$ \emph{believing} $S$ is an open set, i.e.,
\[\inh{L}{S}\coloneqq\left\{x\in X:\forall \varphi\in\NETs{X}\quad\varphi\to_L x\Rightarrow S\in\varphi^\uparrow\right\},\]
while the $L$-\emph{adherence} of $S$ is $\adh{L}{S}\coloneqq X\setminus \inh{L}{X\setminus S}$. In particular, if $L$ is isotone, it can be showed that $x\in \adh{L}{S}$ iff there is a net $\varphi\in\NETs{S}$ such that $\varphi\to_L x$.

All these notions \emph{behave} like their topological counterparts, in the following sense: given subsets $A,B,C,O,S\subseteq X$, it is straightforward to check that
\begin{enumerate}
\item $O\subseteq X$ is $L$-open iff $O\subseteq \inh{L}{O}$,
\item $C\subseteq X$ is $L$-closed iff $\adh{L}{C}\subseteq C$,
\item $\adh{L}{\emptyset}=\emptyset$ and $\inh{L}{X}=X$,
\item both $\adh{L}{\cdot}$ and $\inh{L}{\cdot}$ are $\subseteq$-increasing,
\item $\adh{L}{A\cup B}=\adh{L}{A}\cup \adh{L}{B}$ and $\inh{L}{A\cap B}=\inh{L}{A}\cap\inh{L}{B}$,
\item if $L$ is centered, then $S\subseteq\adh{L}{S}$ and $\inh{L}{S}\subseteq S$.
\end{enumerate}

For centered spaces, one shows that $\adh{L}{S}$ is $L$-closed for every $S\subseteq X$ if and only if the operator $\adh{L}{\cdot}$ is idempotent \cite[Proposition V.4.4]{DM}, which in turns yields the classical characterization of \emph{topological pretopologies} as the ones with idempotent adherences.

\begin{remark} As O'Brien et al.~\cite{Obrien} do not mention inherences in their work, it is important to stress out that our definition is indeed equivalent to the one presented in~\cite{DM}. Dolecki~and Mynard say a point $x\in X$ is $L$-\emph{inherent} to $S$ if $S\in\mathcal{F}$ for every proper filter $\mathcal{F}\in\Fil{X}$ such that $\mathcal{F}\to_L x$. So, one just needs to replace $\mathcal{F}$ with a net $\Gamma(\mathcal{F})$ satisfying $\Gamma(\mathcal{F})^{\uparrow}=\mathcal{F}$.
\end{remark}

Let us finally deal with \emph{compactness}. We say a preconvergence space is \emph{compact} if every net has a convergent subnet, what is simply the natural generalization of topological compactness in terms of nets~\cite{schechter}. To describe compactness for convergence spaces in term of covers, we need to replace open covers by something slightly more general.

We say a family $\mathcal{C}$ of subsets of $X$ is a \emph{local convergence system}\footnote{The usual terminology is ``covering system'', as in~\cite{BB}.} at $x\in X$ if for all $\varphi\in\textsc{Nets}(X)$ such that $\varphi\to_L x$ there is $C\in\mathcal{C}$ containing a tail set of $\varphi$, i.e., such that $C\in\varphi^{\uparrow}$. We say that $\mathcal{C}$ is a \emph{convergence system} if it is a local convergence system for each $x\in X$. With this terminology, it can be showed that a convergence space $\apair{X,L}$ is compact if and only if every convergence system has a finite \underline{subcover} (cf.~\cite{BB}).

In centered preconvergences, every convergence system is a cover. As for topological spaces, every convergence system $\mathcal{C}$ has a subcover $\mathcal{C}'$ such that $\{\operatorname{int}{C}:C\in \mathcal{C'}\}$ is an open cover. It also should be clear that for a continuous function $f\colon X\to Y$ between convergence spaces, $\{f^{-1}[C]:C\in\mathcal{C}\}$ is a convergence system for $X$ whenever $\mathcal{C}$ is a convergence system for $Y$. In this way, the following is an imediate corollary of the classical \emph{Lebesgue's number lemma}.

\begin{proposition}\label{lebesguetrick} Let $H\colon K\to Z$ be a continuous function from a compact metric space $K$ to a convergence space $Z$. If $\mathcal{C}$ is a convergence system for $Z$, then there exists a real number $\delta>0$ such that every subset $A\subseteq K$ of diameter less than $\delta$ is contained in some $C\in\mathcal{C}$.
\end{proposition}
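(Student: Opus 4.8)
The plan is to transport the whole situation back to the compact metric space $K$, where the hypotheses make the classical Lebesgue number lemma apply on the nose. (Since $C\subseteq Z$ while $A\subseteq K$, I read the conclusion ``$A$ is contained in some $C\in\mathcal{C}$'' as the assertion that $H[A]\subseteq C$, equivalently $A\subseteq H^{-1}[C]$, for some $C\in\mathcal{C}$.) The first step is the observation already recorded before the statement: continuity of $H$ turns $\mathcal{C}$ into a convergence system on the source. Indeed, if $\varphi\in\NETs{K}$ and $\varphi\to x$ in $K$, then $H\circ\varphi\to H(x)$ in $Z$, so some $C\in\mathcal{C}$ lies in $(H\circ\varphi)^\uparrow$; pulling back a witnessing tail set gives $H^{-1}[C]\in\varphi^\uparrow$. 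Hence $\mathcal{C}_K\coloneqq\{H^{-1}[C]:C\in\mathcal{C}\}$ is a convergence system for $K$.

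Next I would topologize. Since $K$ is metric, it is a centered convergence space whose convergence is exactly the topological one, so the general remarks preceding the statement apply verbatim: $\mathcal{C}_K$ is in particular a cover of $K$, and for each $x\in K$ the neighbourhood filter $\mathcal{N}_x$ is realized by a net $\varphi_x$ converging to $x$ (e.g.\ $\varphi_x=\Gamma(\mathcal{N}_x)$), so the local convergence system $\mathcal{C}_K$ at $x$ must contain a member $W_x$ with $W_x\in\varphi_x^\uparrow=\mathcal{N}_x$, i.e.\ a neighbourhood of $x$. Therefore $\mathcal{U}\coloneqq\{\operatorname{int}W_x:x\in K\}$ is an open cover of $K$ in the metric topology, each of whose members is contained in some $H^{-1}[C]$ with $C\in\mathcal{C}$. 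Applying the classical Lebesgue number lemma to $\mathcal{U}$ on the compact metric space $K$ yields $\delta>0$ such that every $A\subseteq K$ with $\operatorname{diam}A<\delta$ satisfies $A\subseteq\operatorname{int}W_x\subseteq H^{-1}[C]$ for suitable $x$ and $C$, whence $H[A]\subseteq C$. (If $K=\emptyset$ the claim is vacuous.)

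The tail-set bookkeeping in the first step and the proof of Lebesgue's lemma itself are routine, and I would not reproduce them. The one point that genuinely deserves care is the compatibility exploited in the middle step: one must be sure that ``compact'' as assumed here --- namely ``every net has a convergent subnet'' --- coincides with ordinary metric compactness of $K$, and that a convergence system for a \emph{topological} convergence space really does generate an honest open cover. Both hold because on a topological space the convergence structure recovers the topology, so the two notions of compactness agree and a local convergence system at $x$ is forced to contain a neighbourhood of $x$ by testing it against a net whose induced filter is $\mathcal{N}_x$. With that in hand the statement is, as the paper remarks, an immediate corollary of Lebesgue's number lemma.
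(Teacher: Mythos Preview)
Your argument is correct and is precisely the route the paper has in mind: pull $\mathcal{C}$ back along $H$ to a convergence system on $K$, use that on a topological space a convergence system refines to an open cover, and then invoke the classical Lebesgue number lemma. The paper does not give a separate proof but records exactly these two ingredients in the paragraph preceding the statement and calls the proposition an immediate corollary; your write-up simply spells this out, including the correct reading of ``$A$ is contained in some $C\in\mathcal{C}$'' as $A\subseteq H^{-1}[C]$.
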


\section{Homotopy and fundamental groupoids in limit spaces}

The definition of homotopies between continuous functions remains traditional: given continuous functions $f,g\colon X\to Y$, where $X$ and $Y$ are limit spaces, a \emph{homotopy} between $f$ and $g$ is a continuous function $H\colon [0,1]\times X\to Y$, with $[0,1]$ carrying its standard topological convergence, such that $H(0,x)=f(x)$ and $H(1,x)=g(x)$ for every $x\in X$. However, in our present context, we can \emph{naturally} treat $H$ as a continuous path from $f$ to $g$ in $\cont{X,Y}$, without imposing any additional conditions on $X$ and $Y$. All we have to do is to consider the continuous convergence structure over $\cont{X,Y}$.

Given a net $\apair{f_d}_d$ on $\cont{X,Y}$, we say that $\apair{f_d}_d$ \emph{converges continuously} to $f\in\cont{X,Y}$, denoted $f_d\to_{c} f$, if $\apair{f_d(x_a)}_{d,a}$ converges to $f(x)$ in $X$ for every $x\in X$ and every net $\apair{x_a}_a$ in $X$ such that $x_a\to x$~(cf.\,\cite{poppe,Obrien}). In filter terms, this is expressed by stating that a proper filter $\mathcal{F}$ on $\cont{X,Y}$ converges continuously to $f$ if and only if $\mathcal{F}(\mathcal{G})\to f(x)$ for every $x\in X$ and every proper filter $\mathcal{G}$ on $X$ such that $\mathcal{G}\to x$, where $\mathcal{F}(\mathcal{G})$ is the filter generated by all sets of the form
$F(G)\coloneqq\{f(x):f\in F\text{ and }x\in G\}$ with $F\in\mathcal{G}$ and $G\in\mathcal{G}$ (cf.\,\cite{BB,dol,DM,preuss}).  We refer the reader to \cite{Obrien} for a detailed discussion on the equivalence between these descriptions.

The net description of continuous convergence allows us to write really \emph{readable} proofs of the main facts about $\apair{\cont{X,Y},\to_c}$, in which the core ideas are not obscured by too many layers of sets.
\begin{enumerate}
\item \emph{The continuous convergence is a convergence} to begin with, meaning that it is both centered and isotone. For centering, if the net $\apair{f_d}_d$ is constant, say $f_d=f$ for all $d$, and $x_a\to x$ in $X$, then $\apair{f_d(x_a)}_{d,a}$ is tail equivalent to $\apair{f(x_a)}_a$, which converges to $f(x)$ as $f$ is continuous. Regarding isotonicity, notice that $\apair{f_d(x_a)}_{d,a}$ is a subnet of $\apair{g_e(x_a)}_{e,a}$ whenever $\apair{f_d}_d$ is a subnet of $\apair{g_e}_e$, so $f_d\to_c g$ whenever $g_e\to_c g$, given that $Y$ is isotone by hypothesis.
\item \emph{$\apair{\cont{X,Y},\to_c}$ is a limit space}. For if $\apair{h_d}_d$ is a mixing of nets $\apair{f_d}_d$ and $\apair{g_d}_d$, both continuously converging to $h$, then $\apair{h_d(x_a)}_{d,a}$ is still a mixing of $\apair{f_d(x_a)}_{d,a}$ and $\apair{g_d(x_a)}_{d,a}$ for every net $\apair{x_a}_a$ in $X$. This ensures that $h_d(x_a)\to h(x)$ whenever $x_a\to x$, as $Y$ is a limit space.
\item \emph{$\apair{\cont{X,Y},\to_c}$ is the exponential object $Y^X$ in the category $\Lim$}. This property highlights the significant advantage of convergence spaces over topological spaces. Since most proofs in the literature are written in filter terms, let us present a net version of it.
\end{enumerate}

\begin{proof}[Proof of $(3)$]
First we show that the evaluation map $\eva\colon \cont{X,Y}\times X\to Y$
is continuous. It goes as follows: if $\apair{f_d,x_d}_d$ converges to $\apair{f,x}$ in $\cont{X,Y}\times X$, then $f_d\to_c f$ in $\cont{X,Y}$ and $x_d\to x$ in $X$, implying $\apair{f_d(x_{d'})}_{d,d'}\to f(x)$ in $Y$, hence
$\eva(f_d,x_d)=f_d(x_d)\to f(x)$
since $\apair{f_d(x_d)}_{d}$ is a subnet of $\apair{f_d(x_{d'})}_{d,d'}$. Therefore, $\eva$ is continuous.

The next step is to show that for every limit space $Z$ and every continuous function $H\colon Z\times X\to Y$ there is a unique \underline{continuous} function $\widehat{H}\colon Z\to \cont{X,Y}$ such that the diagram below is commutative.\clearpage

\[\begin{tikzcd}[cramped]
	{\cont{X,Y}\times X} &&& Y \\
	\\
	{Z\times X}
	\arrow["\eva", from=1-1, to=1-4]
	\arrow["{\widehat{H}\times\Id{X}}", from=3-1, to=1-1]
	\arrow["H"', from=3-1, to=1-4]
\end{tikzcd}\]

Notice that the commutativity of the diagram imposes that $\widehat{H}(z)\colon X\to Y$ maps each $x\in X$ to $H(z,x)$ as $z$ runs through $Z$, which is a continuous function because it is a composition of continuous functions. So, it remains to show that $\widehat{H}$ is continuous, what follows precisely by the definition of continuous convergence: if $z_d\to z$ in $Z$ and $x_a\to x$ in $X$, then $\apair{z_d,x_a}\to\apair{z,x}$ in $Z\times X$, implying
\[\widehat{H}(z_d)(x_a)=H(z_d,x_a)\to H(z,x)=\widehat{H}(z)(x),\]
i.e., $\widehat{H}(z_d)\to_c \widehat{H}(z)$.
\end{proof}


\begin{remark}[Splitting and admissible convergences] Adapting the standard terminology for topologies on functions spaces (cf.~Engelking~\cite{Engelking}), we say a convergence $\lambda$ on $\cont{X,Y}$ is \emph{splitting} if $\widehat{H}\colon Z\to\cont{X,Y}$ is continuous whenever $H\colon Z\times X\to Y$ is continuous, and \emph{admissible} if $\lambda$ makes the evaluation $\eva\colon\cont{X,Y}\times X\to Y$ continuous. It follows by the proof above that the continuous convergence is both splitting and admissible. Conversely, it is not hard to show that a splitting admissible convergence must be the continuous convergence. In particular, $\to_c$ is the \emph{weakest} convergence on $\cont{X,Y}$ making the evaluation continuous.
\end{remark}

Therefore, a classic homotopy $H\colon [0,1]\times X\to Y$ between continuous functions $f$ and $g$ gives rise to a (continuous) path $\widehat{H}\colon [0,1]\to\cont{X,Y}$ from $f$ to $g$, in which $\cont{X,Y}$ is endowed with the continuous convergence\footnote{Unless otherwise stated, this is the only convergence structure we consider on $\cont{X,Y}$.}. This allows us to simplify some calculations when defining the fundamental groupoid of a limit space $X$: we will define a category $\Pi(X)$ whose objects are the points of $X$, such that an arrow $x\to y$ corresponds to a class of rel-homotopic paths from $x$ to $y$, essentially as one would do for topological spaces (cf.~Brown~\cite{BrownBook} or Kammeyer~\cite{Kammeyer}).

To fix terminology, a \emph{path} in $X$ is a continuous function $\gamma\colon I\to X$, where $I$ stands for the interval $[0,1]$ with its standard topological convergence. We say $\gamma$ is a path \emph{from $x\in X$ to $y\in X$} if $\gamma(0)=x$ and $\gamma(1)=y$, which are the \emph{end points} of the path. Two paths $\gamma,\gamma'\colon I\to X$ with the same end points, say $x$ and $y$, are \emph{homotopic relative to its end points}, abbreviated as \emph{rel-homotopic}, if there is a path $H\colon I\to \cont{I,X}$ from $\gamma$ to $\gamma'$ such that $H(t)$ is a path from $x$ to $y$ for every $t\in I$.

Let $\gamma,\gamma'\in\cont{I,X}$ be two paths from $x$ to $y$. By writing $\gamma\simeq_{x,y} \gamma'$ to indicate that $\gamma$ and $\gamma'$ are rel-homotopic, the first issue is to show that $\simeq_{x,y}$ is an equivalence relation on the set of paths from $x$ to $y$. Reflexivity and symmetry are straightforward, but transitivity is more delicate, as it depends on a pasting lemma.

\begin{lemma}[Pasting lemma] Let $X$ and $Y$ be limit spaces and $f\colon X\to Y$ be a function. If $\mathcal{C}$ is a locally finite cover for $X$ by closed sets such that the restriction $f|_C$ is continuous for every $C\in\mathcal{C}$, then $f$ is continuous.\end{lemma}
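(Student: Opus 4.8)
The plan is to check continuity directly from the net definition: fix a net $\varphi\colon\mathbb{D}\to X$ with $\varphi\to_L x$ and prove $f\circ\varphi\to_{L'}f(x)$. The obstruction is that $\varphi$ may visit infinitely many members of $\mathcal{C}$, so the first move is to trim it. Since the identity $\apair{X,L}\to\apair{X,\to_{\tau_L}}$ is continuous we have $\varphi\to_{\tau_L}x$, so every $\tau_L$-open set containing $x$ contains a tail of $\varphi$; and by Remark~\ref{cofinaltrick} (applied both to $\varphi$ and to $f\circ\varphi$, since a net and any of its tails have the same limits) we may replace $\varphi$ by its restriction to such a tail without changing what must be proved. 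The aim of the trimming is to reduce to the case in which $\varphi[\mathbb{D}]$ lies in a \emph{finite} union $C_1\cup\dots\cup C_m$ of members of $\mathcal{C}$, \emph{each containing $x$}.

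To reach that case I would first record the fact that, in a limit space, the union of a locally finite family of closed sets is closed. This follows from the description of closedness via adherence — for an isotone convergence, $x\in\adh{L}{S}$ precisely when some net lying in $S$ $L$-converges to $x$ — together with the observation that finite unions of closed sets are closed (as $\tau_L$ is a topology): if $\psi\to_L x$ with $\psi$ ranging in such a union, then after restricting $\psi$ to a tail inside a $\tau_L$-neighborhood of $x$ meeting only finitely many members $D_1,\dots,D_k$ of the family, $\psi$ ranges in the closed set $D_1\cup\dots\cup D_k$, whence $x\in\adh{L}{D_1\cup\dots\cup D_k}\subseteq D_1\cup\dots\cup D_k$, so $x$ lies in the whole union. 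Applying this to $\{C\in\mathcal{C}:x\notin C\}$ yields a closed set $B_x$ with $x\notin B_x$; then $X\setminus B_x$ is $\tau_L$-open, and intersecting it with a $\tau_L$-neighborhood of $x$ witnessing local finiteness of $\mathcal{C}$ produces a $\tau_L$-open $U\ni x$ meeting only finitely many $C\in\mathcal{C}$, all of them containing $x$. Since $\mathcal{C}$ covers $X$, $U\subseteq C_1\cup\dots\cup C_m$ where $C_1,\dots,C_m$ ($m\ge 1$) list these sets, and trimming $\varphi$ to a tail with range in $U$ completes the reductions.

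The core of the argument is then a mixing computation. For each $i\le m$ define $\varphi_i\colon\mathbb{D}\to X$ by $\varphi_i(d)\coloneqq\varphi(d)$ if $\varphi(d)\in C_i$ and $\varphi_i(d)\coloneqq x$ otherwise. Since $x\in C_i$, the net $\varphi_i$ ranges in $C_i$, and it is a mixing of $\varphi$ with the constant net at $x$; as $X$ is a limit space and both of these $L$-converge to $x$ (the constant one by centeredness), stability gives $\varphi_i\to_L x$, hence $\varphi_i\to x$ in the subspace $C_i$, and continuity of $f|_{C_i}$ gives $f\circ\varphi_i=(f|_{C_i})\circ\varphi_i\to_{L'}f(x)$. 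Finally, for each $d$ choose $j\le m$ with $\varphi(d)\in C_j$; then $\varphi_j(d)=\varphi(d)$, so $(f\circ\varphi)(d)=(f\circ\varphi_j)(d)$, which exhibits $f\circ\varphi$ as an $m$-fold mixing of $f\circ\varphi_1,\dots,f\circ\varphi_m$, each $L'$-converging to $f(x)$. Iterating stability in the limit space $Y$ — a short induction on $m$ that reduces an $m$-fold mixing to a two-fold one at each step — yields $f\circ\varphi\to_{L'}f(x)$, as desired.

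The step I expect to be the real obstacle is conceptual rather than computational: one naturally wants to pass to a subnet of $\varphi$ that stays inside a single $C\in\mathcal{C}$, but this is worthless, because convergence of a subnet does not entail convergence of $\varphi$ itself; the finitely many relevant closed sets must be treated simultaneously, and the only device for that is mixing combined with iterated stability — which is precisely why the hypothesis has to be ``limit spaces'' and not merely ``convergence spaces''. The secondary, milder difficulty is the closedness of $B_x$: it is here that local finiteness of $\mathcal{C}$ is genuinely used, since for a merely point-finite closed cover the union of the members avoiding $x$ can fail to be closed and the first reduction breaks down.
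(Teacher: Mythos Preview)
Your proof is correct and follows essentially the same route as the paper's: reduce to finitely many closed $C_i\ni x$, define $\varphi_i$ by replacing values outside $C_i$ with $x$, use stability in $X$ to get $\varphi_i\to x$, apply continuity of $f|_{C_i}$, and conclude via iterated stability in $Y$. The only cosmetic difference is in the reduction step---the paper first invokes local finiteness to obtain a finite $\mathcal{S}$ and then shrinks $O$ using closedness of the (finitely many) $S\not\ni x$, whereas you prove the stronger lemma that $B_x=\bigcup\{C\in\mathcal{C}:x\notin C\}$ is closed; both arrive at the same setup.
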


\begin{proof}
For a point $x\in X$ and a net $\varphi$ in $X$ such that $\varphi\to x$, we need to show that $f\circ \varphi\to f(x)$ in $Y$. Let $O\subseteq X$ be an open set such that $x\in O$ and $\mathcal{S}\coloneqq\{C\in\mathcal{C}:C\cap O\ne\emptyset\}$ is finite. Since each $S\in\mathcal{S}$ is closed and $\mathcal{S}$ is finite, there is no loss of generality in assuming $x\in S$ for every $S\in\mathcal{S}$. Now, let $\varphi_S\colon\dom{\varphi}\to X$ be the net given by the rule
\[\varphi_S(d)\coloneqq \begin{cases}
\varphi(d)&\text{if }\varphi(d)\in S\\
\quad x&\text{if }\varphi(d)\not\in S\end{cases}.\]

Since $X$ is a limit space\footnote{In fact, it would be enough to ask $X$ to be a Kent space~\cite{preuss}.}, it follows that $\varphi_S\to x$. Given that $\varphi_S$ is a net in $S$ and $x\in S$, we get $\varphi_S\to x$ in the subspace $S$, so $f|_S\circ \varphi_S\to f|_S(x)$ in $Y$ due to the continuity of $f|_S$. As $Y$ is also a limit space, to conclude that $f\circ \varphi\to f(x)$, it suffices to find $A\in\dom{\varphi}$ such that $f(\varphi(\alpha))\in\{f|_S(\varphi_S(\alpha)):S\in\mathcal{S}\}$ for every $\alpha\succeq A$ (cf. Remark~\ref{cofinaltrick}). Since $\varphi\to x$ and $x\in O$, there exists $A\in\dom{\varphi}$ such that $\varphi(\alpha)\in O$ for every $\alpha\succeq A$. Finally, notice that as $\mathcal{C}$ covers $X$, there is $C\in\mathcal{C}$ such that $\varphi(\alpha)\in C$, hence $C\cap O\ne\emptyset$, thus implying $C\in\mathcal{S}$ and $\varphi_C(\alpha)=\varphi(\alpha)$, so $f(\varphi(\alpha))=f|_C(\varphi_S(\alpha))$, as desired.
\end{proof}

\begin{remark} As far as we know, the formulation of the previous theorem is unknown in the literature: the version presented by Preuss~\cite{preuss} assumes $\mathcal{C}$ to be finite and $X$ to be \emph{pretopological}; Dolecki and Mynard's version~\cite{DM} assume $Y$ to be \emph{pseudotopological}; there is yet another version assuming both $X$ and $Y$ to be pseudotopological, due to Dossena~\cite{dossena}. On the other hand, Preuss~\cite{preuss} already showed that the result is false if $Y$ is not a limit space.\end{remark}

Therefore, for paths $\gamma,\gamma'\in\cont{I,Z}$ such that $\gamma(1)=\gamma'(0)$, the usual gluing function $\gamma*\gamma'\colon I\to Z$ given by
\[\gamma*\gamma'(t)\coloneqq\begin{cases} \gamma(2t)& \text{ if }t\leq \frac{1}{2}\\
\gamma'(2t-1)&\text{ if }\frac{1}{2}\leq t\leq 1\end{cases}\]
is such that $\gamma*\gamma'$ is continuous in $\left[0,\frac{1}{2}\right]$ and $\left[\frac{1}{2},1\right]$, hence it is a path in $Z$ from $\gamma(0)$ to $\gamma'(1)$. Since $\cont{I,X}$ is a limit space whenever $X$ is a limit space, it follows that $\simeq_{x,y}$ defines an equivalence relation on the subset $\mathbb{P}[x,y]$ of $\cont{I,X}$ whose members are the paths from $x$ to $y$: for paths $H,H'\colon I\to\cont{I,X}$ witnessing $\gamma\simeq_{x,y} \gamma'$ and $\gamma'\simeq_{x,y} \gamma''$ respectively, $H*H'$ witnesses that $\gamma\simeq_{x,y} \gamma''$.

Proceeding with the construction of the category $\Pi(X)$:

\begin{itemize}
\item objects are the points of $X$;
\item an arrow from $x$ to $y$ is a $\simeq_{x,y}$-equivalence class $[\gamma]$ of a path $\gamma\in\mathbb{P}[x,y]$.
\end{itemize}

Given arrows $[\gamma]\colon x\to y$ and $[\rho]\colon y\to z$, the composition $[\rho]\circ[\gamma]$ is the class $[\gamma*\rho]$. To see it is well defined, we take $\gamma'$ and $\rho'$ such that $\gamma\simeq_{x,y}\gamma'$ and $\rho\simeq_{y,z}\rho'$ and show that $\gamma*\rho\simeq_{x,z}\gamma'*\rho'$. First, for a path $G\colon I\to\cont{I,X}$ witnessing $\gamma\simeq_{x,y}\gamma'$, notice that
$K\colon I\to \cont{I,X}$, given by $K(t)\coloneqq G(t)*\rho$,
is a function such that $K(t)\in\mathbb{P}[x,z]$ for which $K(0)=\gamma*\rho$ and $K(1)=\gamma'*\rho$. So, in order to see that $K$ witnesses $\gamma*\rho\simeq_{x,z}\gamma'*\rho$, it remains to verify its continuity, what follows easily once we notice that the function $I\times I\to X$, mapping $\apair{t,s}$ to $K(t)(s)$,
is continuous when restricted to $I\times \left[0,\frac{1}{2}\right]$ and $I\times \left[\frac{1}{2},1\right]$. Similarly, one shows that $\gamma'*\rho\simeq_{x,z}\gamma'*\rho'$. 



Finally, the identity arrows are given by the classes of constant paths, while the associativity of composition follows easily with an adaptation of the approach proposed by Brown~\cite{BrownBook}. Given arrows $[\gamma]\colon w\to x$, $[\gamma']\colon x\to y$ and $[\gamma'']\colon y\to z$, let $\Gamma\colon [0,3]\to X$ be the continuous pasting of the representatives of the classes without \emph{reparametrizations}, i.e.,
\[\Gamma(t)\coloneqq\begin{cases}
\gamma(t)&\text{ if }t\in [0,1]\\
\gamma'(t-1)&\text{ if }t\in [1,2]\\
\gamma''(t-2)&\text{ if }t\in[2,3]\end{cases}\]
 and let $p_0,p_1\colon [0,1]\to [0,3]$ be the homeomorphisms such that $\Gamma\circ p_0=\gamma*(\gamma'*\gamma'')$ and $\Gamma\circ p_1=(\gamma*\gamma')*\gamma''$. Since there is a path $H\colon I\to\cont{I,[0,3]}$ for which $H(0)=p_0$, $H(1)=p_1$ and $H(t)\in\mathbb{P}[0,3]$ for each $t$, it follows that
\[\begin{split}\Phi\colon I&\to\cont{I,X}\\
t&\mapsto \Gamma\circ H(t)\end{split}\]
is a map witnessing $\gamma*(\gamma'*\gamma'')\simeq_{w,z}(\gamma*\gamma')*\gamma''$, since the composition is a continuous map under continuous convergence, i.e.,

\begin{proposition} The map
\[\begin{split}\circ \colon \cont{X,Y}\times \cont{Y,Z}&\to \cont{X,Z}\\
\apair{g,f}&\mapsto f\circ g\end{split}\]
is continuous for every limit spaces $X$, $Y$ and $Z$.
\end{proposition}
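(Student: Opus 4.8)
The plan is to unwind the definition of continuous convergence twice and then collapse a diagonal subnet, exactly in the spirit of the earlier argument that the evaluation map $\eva$ is continuous. Let $\apair{g_d,f_d}_d$, on a directed set $\mathbb{D}$, be a net in $\cont{X,Y}\times\cont{Y,Z}$ converging to $\apair{g,f}$; by the definition of the product preconvergence this means $g_d\to_c g$ in $\cont{X,Y}$ and $f_d\to_c f$ in $\cont{Y,Z}$. I must show $f_d\circ g_d\to_c f\circ g$ in $\cont{X,Z}$, i.e.\ that for every $x\in X$ and every net $\apair{x_a}_a$ on a directed set $\mathbb{A}$ with $x_a\to x$ in $X$, the net $\apair{f_d(g_d(x_a))}_{d,a}$ converges to $f(g(x))$ in $Z$.

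First, since $g_d\to_c g$ and $x_a\to x$, the definition of continuous convergence gives directly that the net $\psi\colon\mathbb{D}\times\mathbb{A}\to Y$, $\psi(d,a)=g_d(x_a)$, converges to $g(x)$ in $Y$. Next, I apply the hypothesis $f_d\to_c f$ to the net $\psi$, which converges to the point $g(x)\in Y$: this yields that the net $\Psi\colon\mathbb{D}\times(\mathbb{D}\times\mathbb{A})\to Z$ defined by $\Psi(d',d,a)=f_{d'}(g_d(x_a))$ converges to $f(g(x))$ in $Z$. Finally, I consider the diagonal net $\Phi\colon\mathbb{D}\times\mathbb{A}\to Z$, $\Phi(d,a)=f_d(g_d(x_a))$, and check that $\Phi$ is a subnet of $\Psi$, i.e.\ $\Psi^\uparrow\subseteq\Phi^\uparrow$: given a tail set of $\Psi$ determined by $\apair{d_0',d_0,a_0}$, choose $d_1\in\mathbb{D}$ with $d_1\succeq d_0'$ and $d_1\succeq d_0$; then the tail set of $\Phi$ determined by $\apair{d_1,a_0}$ is contained in it, since $d\succeq d_1$ forces $d\succeq d_0'$ and $d\succeq d_0$. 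Because $Z$ is a limit space, hence isotone, $\Phi$ inherits the limit of $\Psi$, so $\Phi\to f(g(x))$ in $Z$, which is exactly the required conclusion; continuity of $\circ$ follows.

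I do not expect a serious obstacle here: everything reduces, via the definition of continuous convergence, to a statement about convergence in $Z$ alone, and the only step that needs a moment's care is the diagonalization in the last paragraph — verifying that collapsing the triple-indexed net $\apair{f_{d'}(g_d(x_a))}_{d',d,a}$ to the diagonal $\apair{f_d(g_d(x_a))}_{d,a}$ genuinely produces a subnet in the sense $\Psi^\uparrow\subseteq\Phi^\uparrow$. This is the same maneuver already used (for $\eva$) in the form ``$\apair{f_d(x_d)}_d$ is a subnet of $\apair{f_d(x_{d'})}_{d,d'}$'', so it should present no difficulty; in fact the argument only uses that $Z$ is a convergence space, not the full strength of $X,Y,Z$ being limit spaces.
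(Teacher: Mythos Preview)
Your proof is correct and follows essentially the same approach as the paper's: use $g_d\to_c g$ to get $g_d(x_a)\to g(x)$, then $f_d\to_c f$ to get $f_{d'}(g_d(x_a))\to f(g(x))$, and finish by observing that $\apair{f_d(g_d(x_a))}_{d,a}$ is a subnet of $\apair{f_{d'}(g_d(x_a))}_{d',d,a}$. Your version is in fact more detailed, since you spell out the verification of the subnet relation and correctly note that only isotonicity of $Z$ is used.
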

\begin{proof}
We present a proof for the sake of completeness: if $\apair{g_d,f_d}_{d}$ converges to $\apair{g,f}$ in $\cont{X,Y}\times\cont{Y,Z}$ and $x_a\to x$ in $X$, then $g_d(x_a)\to g(x)$ in $Y$ and $f_{d'}(g_d(x_a))\to f(g(x))$ in $Z$, hence the result follows as $\apair{f_d(g_d(x_a))}_{d,a}$ is a subnet of $\apair{f_{d'}(g_d(x_a))}_{d',d,a}$.
\end{proof}

With similar reasoning it can be showed that every arrow in $\Pi(X)$ is an isomorphism, so it is in fact a groupoid.
Just as happens with the topological fundamental groupoid, the construction above induces a functor $\Pi\colon\Lim\to\Groupoid$ which sends a continuous map $f\colon X\to Y$ to the functor
\[\begin{split}\Pi(f)\colon\Pi(X)&\to \Pi(Y)\\
[\gamma]&\mapsto [f\circ \gamma]\end{split},\]
which is well defined because the correspondence $t\mapsto f\circ H(t)$ defines a path in $\cont{I,Y}$ from $f\circ \gamma$ to $f\circ\gamma'$ whenever $H\colon I\to\cont{I,X}$ is a path from $\gamma$ to $\gamma'$. The details are left to the reader.

By construction, the $\Lim$-groupoid functor above extends the usual $\Top$-groupoid functor $\Pi\colon\Top\to\Groupoid$, in the sense that the diagram below is commutative\footnote{In particular, $\Pi(X)$ is a wide subgroupoid of $\Pi(\tau(X))$ for every limit space $X$.}.
\[\begin{tikzcd}[cramped]
	\Lim && {\Groupoid} \\
	\\
	\Top
	\arrow["\Pi", from=1-1, to=1-3]
	\arrow["i", from=3-1, to=1-1]
	\arrow["\Pi"', from=3-1, to=1-3]
\end{tikzcd}\]

Considering the topological modification $\tau\colon \Lim\to\Top$, one might wonder if $\Pi(X) = \Pi(\tau(X))$ for every limit space $X$. If this were true, our entire approach would seem redundant. However, this is not the case.

\begin{example}\label{vankampenex} Over $X\coloneqq \mathbb{R}$, we define a convergence $L$ by putting $\varphi\to_L x$ if and only if $\varphi$ is a subnet of a \underline{sequence} converging to $x$ in ordinary sense. In~\cite{dol,DM}, it is showed that $\apair{X,L}$ is a \emph{pseudotopological} non-topological space such that $\tau(X)=\mathbb{R}$, i.e., the topological modification of $L$ is the standard topological convergence of the real line. Here we show that $\Pi(X)$ is discrete in the categorical sense, hence $\Pi(X)\ne \Pi(\tau(X))$. First, we need a lemma, whose proof we left to the reader.

\begin{lemma} If $\rho\colon I\to\mathbb{R}$ is a non-constant path, then there is $t\in I$ such that $\rho[V]$ is uncountable for every neighborhood $V\subseteq I$ of $t$.\end{lemma}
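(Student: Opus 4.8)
The plan is to reduce the statement to a fact about ordinary continuous real-valued functions on $[0,1]$ and then to argue by connectedness.

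First I would observe that, since $\tau(X)=\mathbb{R}$, the convergence $L$ is finer than the standard topological convergence of $\mathbb{R}$; hence every $L$-continuous path $\rho\colon I\to X$ is automatically continuous as a map $I\to\mathbb{R}$ with the usual topology (if $\varphi\to t$ in $I$ then $\rho\circ\varphi\to_{L}\rho(t)$, and therefore $\rho\circ\varphi\to\rho(t)$ in the Euclidean sense, so Proposition~\ref{trigger} gives Euclidean continuity). Thus it suffices to prove: every non-constant continuous $\rho\colon[0,1]\to\mathbb{R}$ admits a point $t\in I$ such that $\rho[V]$ is uncountable for every neighborhood $V\subseteq I$ of $t$.

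Next I would introduce the set $U\coloneqq\{t\in I:\rho\text{ is constant on some neighborhood of }t\}$, which is open, and show that its complement is exactly the set of points we are looking for. The key remark is that for an \emph{interval} neighborhood $V$ of $t$ the image $\rho[V]$ is connected, hence an interval of $\mathbb{R}$, and a countable interval is a singleton. So, on the one hand, if $t\notin U$ then every interval neighborhood of $t$ is mapped onto a non-degenerate interval, which is uncountable; since every neighborhood of $t$ contains an interval neighborhood, $\rho[V]$ is uncountable for \emph{every} neighborhood $V$ of $t$. On the other hand, if $t\in U$ then some neighborhood of $t$ has a one-point (in particular countable) image. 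Hence $I\setminus U=\{t\in I:\rho[V]\text{ is uncountable for every neighborhood }V\text{ of }t\}$, and it only remains to check $I\setminus U\neq\emptyset$.

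Finally I would rule out $U=I$: if $U=I$, then $\rho$ is locally constant on $I$, so the fibre $\rho^{-1}(\rho(0))$ is a non-empty subset of $I$ that is both closed (by Euclidean continuity of $\rho$) and open (by local constancy), hence all of $I$ by connectedness of $I$, contradicting the hypothesis that $\rho$ is non-constant. Therefore $I\setminus U\neq\emptyset$, and any $t$ in this set has the required property. I do not expect a genuine obstacle here; the only points needing a little care are shrinking an arbitrary neighborhood of $t$ to an interval before invoking connectedness, and recording that the $L$-continuity hypothesis really is stronger than Euclidean continuity, so that $\rho$ is indeed an ordinary path in $\mathbb{R}$.
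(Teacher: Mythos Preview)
Your argument is correct. The paper in fact leaves this lemma to the reader, so there is no proof to compare against.

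One small remark on framing: as stated, the lemma already takes $\rho$ to be a path into $\mathbb{R}$ with its ordinary topology, so your opening reduction from $L$-continuity to Euclidean continuity is not part of the lemma itself---that step is exactly what the paper does \emph{after} the lemma, when it applies the result to $\tau(\gamma)\colon I\to\tau(X)=\mathbb{R}$. You have simply folded that application into your proof, which does no harm. The substantive part---introducing the open set $U$ of points where $\rho$ is locally constant, observing that $\rho$ sends interval neighborhoods to intervals so that a countable image forces a singleton image, and ruling out $U=I$ by connectedness of $I$---is clean and complete.
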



Now, assume there is a non-constant path $\gamma\colon I\to X$. Then, $\tau(\gamma)\colon I\to \tau(X)=\mathbb{R}$ must also be a non-constant path, so there is a point $t\in I$ as described in the lemma. However, $\gamma$ cannot be continuous at $t$: by taking a net $\varphi$ in $I$ such that $\varphi^{\uparrow}=\mathcal{N}_t$, we have $\varphi\to t$ in $I$ but $\gamma\circ \varphi\not{\!\!\to_L}\gamma(t)$. This follows because a net $\psi$ in $X$ whose all tail sets are uncountable does not converge, as $\psi$ cannot be subnet of a sequence.
\end{example}

To obtain the \emph{fundamental group} of a pointed limit space $(X,x_0)$, we simply put $\pi_1(X,x_0)\coloneqq\Pi(X)[x_0,x_0]$, as this coincides precisely with the classical definition (cf.\,Kammeyer~\cite{Kammeyer}). It follows that $\pi_1(X,x_0)$ is a subgroup of $\pi_1(\tau(X),x_0)$ for every pointed limit space $(X,x_0)$.

\begin{example} The previous example yields a non-topological space with trivial fundamental group\footnote{For another easy example, recall that for a Tychonoff space $X$, $\cont{X,\mathbb{R}}$ has a topology inducing the continuous convergence if and only if $X$ is locally compact \cite{DM}. So, $\apair{\cont{X,\mathbb{R}},\to_c}$ is a non-topological limit vector space whenever $X$ is a Tychonoff and non-locally compact space (e.g., $X=\mathbb{Q}$), hence it is null-homotopic.}. To get a non-topological space with non-trivial fundamental group, we adapt Example 1.3.2 in \cite{BB}. Let $\nu$ be the usual topology of $\mathbb{R}^2$ and $X\subseteq \mathbb{R}^2$ be a ``lollipop'', i.e., the union of a line $L$ and a circle $C$ of the plane, such that $L\cap C=\{p\}$, as in Figure~\ref{armengue} below. 

\begin{figure}[ht!]
\centering
  \includegraphics[width=4cm]{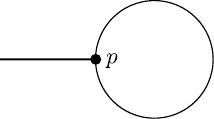}
	\caption{\label{armengue}The space $X$.}
 \end{figure}

Let $S\coloneqq L\setminus\{p\}$ and $D\subseteq S$ be a countable $\nu$-dense subset of $S$. For a net $\varphi\in\NETs{X}$ and a point $x\in X$, we define a limit convergence $\lambda$ such as:
\begin{enumerate}
\item in case $x\ne p$, then $\varphi\to_\lambda x$ if and only if $\varphi\to_\nu x$;
\item in case $x=p$, then $\varphi\to_\lambda x$ if and only if
\begin{enumerate}
\item $C\in\varphi^\uparrow$ and $\varphi\to_\nu p$, i.e., there is $A\in\dom{\varphi}$ such that $\varphi(a)\in C$ for every $a\succeq A$ and $\varphi\to_\nu p$ in the usual sense, or
\item $\varphi^\uparrow\mathrel{\#} D$ and $\varphi\to_\nu p$, i.e., for every $A\in\dom{\varphi}$ there is $a\succeq A$ such that $\varphi(a)\in D$ and $\varphi\to_\nu p$ in the usual sense, or
\item $\varphi$ is a subnet of a mixing of finitely many nets satisfying conditions (a) and (b).\end{enumerate}
\end{enumerate}

The definition of $\lambda$ ensures $\apair{X,\lambda}\in\Lim$. To see that $\apair{X,\lambda}\not\in\Top$, notice that $\adh{\lambda}{S\setminus D}=S\setminus\{p\}$ and $\adh{\lambda}{S\setminus\{p\}}=L$, thus showing that the adherence operator is not idempotent. It is not hard to see that $\pi_1(X,p)=\pi_1(C,p)$: since the topological modification of $\apair{X,L}$ is the $\nu$-subspace topology inherited from $\mathbb{R}^2$, it follows that a continuous path $\gamma\colon I\to X$ such that $\gamma(0)=p$ cannot intercept $S$, i.e., there is no $t\in I$ such that $\gamma(t)\in S$, otherwise we would be able to obtain a sequence $\apair{x_n}_n$ in $I$ such that $x_n\to 0$ but $\gamma(x_n)\in S\setminus D$ for every $n$, preventing $\gamma(x_n)\to_\lambda\gamma(0)=p$ to happen.
\end{example}

\begin{remark}
In a private communication, Antonio Rieser kindly pointed out that the existence of fundamental group(oid) for limit spaces can also be derived from the cofibration structure he presented in~\cite{riesernew}. We emphasize that our main purpose here is to illustrate how nets can simplify the treatment of convergence spaces.
\end{remark}


\section{The Seifert-Van Kampen Theorem for fundamental groupoids of limit spaces}

Let us recall one of the many versions of the theorem in the title, as the one presented in~\cite{Kammeyer}.

\begin{theorem}[Seifert-Van Kampen, groupoid version]
Let $X$ be a topological space, and let $\mathcal{O}$ be an open cover of $X$, which is closed under finite intersections. Consider $\mathcal{O}$ as a small category with morphisms given by inclusions. Then, restricting $\Pi$ to $\mathcal{O}$ defines a diagram $\Pi|_{\mathcal{O}}\colon \mathcal{O}\to\Groupoid$ such that $\Pi(X)=\colim{\Pi|_{\mathcal{O}}}$.
\end{theorem}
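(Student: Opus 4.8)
The plan is to verify directly that the natural cocone $\bigl(\Pi(\iota_U)\colon\Pi(U)\to\Pi(X)\bigr)_{U\in\mathcal{O}}$ induced by the inclusions $\iota_U\colon U\hookrightarrow X$ is a colimit cocone for the diagram $\Pi|_{\mathcal{O}}$; that is, given any groupoid $\mathcal{H}$ and any family of functors $f_U\colon\Pi(U)\to\mathcal{H}$ compatible with inclusions (so $f_V\circ\Pi(\iota_{U,V})=f_U$ whenever $U\subseteq V$ in $\mathcal{O}$), I must produce a unique functor $f\colon\Pi(X)\to\mathcal{H}$ with $f\circ\Pi(\iota_U)=f_U$ for every $U$. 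On objects one is forced to set $f(x)\coloneqq f_U(x)$ for any $U\in\mathcal{O}$ containing $x$ (such $U$ exists, as $\mathcal{O}$ covers $X$), and this is unambiguous: if $x\in U\cap V$ then $U\cap V\in\mathcal{O}$, and compatibility of $f_{U\cap V}$ with the two inclusions $U\cap V\hookrightarrow U$ and $U\cap V\hookrightarrow V$ gives $f_U(x)=f_{U\cap V}(x)=f_V(x)$.

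The one-dimensional geometric input is the following subdivision statement: every path $\gamma\colon I\to X$ equals, in $\Pi(X)$, a composite $[\gamma_n]\circ\dots\circ[\gamma_1]$ where each $\gamma_i$ is a path whose image lies in some $U_i\in\mathcal{O}$. Indeed $\{\gamma^{-1}[U]:U\in\mathcal{O}\}$ is an open cover of the compact metric space $I$, so the Lebesgue number lemma --- equivalently Proposition~\ref{lebesguetrick}, since an open cover is in particular a convergence system --- yields a partition $0=t_0<\dots<t_n=1$ with $\gamma[[t_{i-1},t_i]]\subseteq U_i$; the $\gamma_i$ are the restrictions $\gamma|_{[t_{i-1},t_i]}$ affinely reparametrized to $I$, and the identity $[\gamma]=[\gamma_n]\circ\dots\circ[\gamma_1]$ follows from the Pasting Lemma together with the reparametrization technique used in the associativity argument for $\Pi$. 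This forces $f$ on morphisms --- hence settles uniqueness --- via $f([\gamma])\coloneqq f_{U_n}([\gamma_n])\circ\dots\circ f_{U_1}([\gamma_1])$, each $\gamma_i$ being corestricted to $U_i$; functoriality is then immediate (concatenate chosen presentations of $\gamma$ and $\rho$ to get one for $\gamma*\rho$; constant paths go to identities), so the construction of $f$ reduces to showing this formula is well defined.

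Well-definedness amounts to the assertion: if $\gamma,\gamma'\colon I\to X$ are rel-homotopic paths, each presented as a concatenation of pieces lying in members of $\mathcal{O}$, then the two resulting products in $\mathcal{H}$ are equal. Independence of the partition and of the chosen $U_i$ is the special case $\gamma'=\gamma$ with constant homotopy, after passing to a common refinement --- which is harmless, since refining a presentation alters the product only by inserting identities and by reparametrizations, both invisible to $f$. To prove the assertion, uncurry the homotopy to a continuous $\widetilde H\colon I\times I\to X$; then $\{\widetilde H^{-1}[U]:U\in\mathcal{O}\}$ covers the compact metric space $I\times I$, so Proposition~\ref{lebesguetrick} gives $\delta>0$ such that every set of diameter $<\delta$ maps into some member of $\mathcal{O}$. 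Choosing $m$ with $\sqrt2/m<\delta$ and the grid of $m^2$ closed sub-squares $Q_{ij}$ of side $1/m$, fix $U_{ij}\in\mathcal{O}$ with $\widetilde H[Q_{ij}]\subseteq U_{ij}$.

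The main obstacle is the ensuing two-dimensional bookkeeping, which is the genuine heart of the theorem. One fills $I\times I$ one cell at a time --- deforming the ``staircase'' path from the lower-left to the upper-right corner across $Q_{11},Q_{21},\dots$ --- using at each cell that $\widetilde H|_{Q_{ij}}$, suitably reparametrized, is a rel-homotopy \emph{inside} $U_{ij}$ between the two edge-paths of $Q_{ij}$ joining its lower-left to its upper-right corner; applying $f_{U_{ij}}$ turns this into an equality in $\mathcal{H}$, and since an edge shared by two adjacent cells has image in both corresponding members of $\mathcal{O}$, hence in their intersection (again in $\mathcal{O}$), where the relevant functors agree, these local equalities splice into the single global equality sought. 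Making this rigorous needs the Pasting Lemma once more --- each intermediate staircase is genuinely a path in $X$, being glued along finitely many closed cells with restrictions of a continuous map --- together with the fact, already exploited for associativity of $\Pi$, that the affine reparametrizations relating the various concatenations are homeomorphisms of $I$ and so induce rel-homotopies. This inductive square-filling is a routine but lengthy adaptation of the treatments in Brown~\cite{BrownBook} and Kammeyer~\cite{Kammeyer}, with the Pasting Lemma assuming the role of the topological gluing lemma throughout --- which is precisely why the same argument will carry over when $X$ is allowed to be an arbitrary limit space.
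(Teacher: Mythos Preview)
Your argument is correct and follows the standard approach; note that the paper does not actually prove this statement (it is recalled from \cite{Kammeyer} without proof), but your proof matches in every essential point the sketch the paper gives immediately afterward for its limit-space generalization---same universal-cocone verification, same Lebesgue-number subdivision via Proposition~\ref{lebesguetrick}, same formula $\mathcal{F}([\gamma])=\mathcal{F}_{U_n}([\gamma_n])\circ\dots\circ\mathcal{F}_{U_1}([\gamma_1])$. You go further than the paper's sketch in spelling out the two-dimensional grid/staircase argument for well-definedness, which the paper defers with ``arguments \dots\ similar to those in the topological case''.
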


The purpose of this section is to replace the word ``topological space'' with ``limit space'' in a reasonable way. All we have to do is to replace ``open covers'' with ``convergence systems'', as defined in Section~\ref{section2}, and carry over the usual proof with the obvious adaptations.

\begin{theorem}[Seifert-Van Kampen, groupoid version in $\Lim$]
Let $X$ be a limit space, and let $\mathcal{O}$ be a convergence system for $X$, which is closed under finite intersections. Consider $\mathcal{O}$ as a small category with morphisms given by inclusions. Then, restricting $\Pi$ to $\mathcal{O}$ defines a diagram $\Pi|_{\mathcal{O}}\colon \mathcal{O}\to\Groupoid$ such that 
\[\Pi(X)=\colim{\Pi|_{\mathcal{O}}}.\]
\end{theorem}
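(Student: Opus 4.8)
The strategy is to mimic the classical groupoid van Kampen argument (as in Brown or Kammeyer), checking at each step that the only topological inputs we actually use are available for limit spaces via the machinery of Sections~\ref{secnets}--\ref{section2}. Concretely, I would first unwind what $\colim{\Pi|_{\mathcal{O}}}$ means: since $\Groupoid$ is cocomplete, the colimit exists and comes with canonical functors $j_U\colon\Pi(U)\to\colim{\Pi|_{\mathcal{O}}}$ for $U\in\mathcal{O}$, compatible with inclusions. The inclusions $U\hookrightarrow X$ induce $\Pi(U)\to\Pi(X)$, compatibly, so by the universal property there is a unique comparison functor $\Phi\colon\colim{\Pi|_{\mathcal{O}}}\to\Pi(X)$. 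The whole theorem reduces to showing $\Phi$ is an isomorphism of groupoids, which (since on objects it is the identity, as $\mathcal{O}$ covers $X$ — here we use that a convergence system is a cover in a centered space) amounts to surjectivity and injectivity on each hom-set.

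\textbf{Surjectivity (every path and every homotopy decomposes).} Given a path $\gamma\colon I\to X$, I would apply Proposition~\ref{lebesguetrick} to the continuous map $\gamma$ from the compact metric space $I$ and the convergence system $\mathcal{O}$: there is $\delta>0$ so that any subinterval of $I$ of diameter $<\delta$ lands inside some member of $\mathcal{O}$. Choosing a partition $0=t_0<t_1<\dots<t_n=1$ of mesh $<\delta$, each restriction $\gamma|_{[t_{i-1},t_i]}$ (reparametrized to $I$) is a path inside some $U_i\in\mathcal{O}$, and since $\mathcal{O}$ is closed under finite intersection the endpoints $\gamma(t_i)$ lie in $U_i\cap U_{i+1}\in\mathcal{O}$. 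Then $[\gamma]=[\gamma|_{[t_{n-1},t_n]}]\circ\cdots\circ[\gamma|_{[t_0,t_1]}]$ in $\Pi(X)$ expresses $[\gamma]$ as a composite of images of arrows from the $\Pi(U_i)$, so it lies in the image of $\Phi$. The same Lebesgue-number argument applied to a rel-homotopy $H\colon I\times I\to X$ (again $I\times I$ is a compact metric space, and $H$ is continuous because the exponential correspondence of part~(3) lets us pass between the path $I\to\cont{I,X}$ and the map on $I\times I$) produces a grid subdividing the square into cells each mapped into a single member of $\mathcal{O}$; the pasting lemma guarantees the cell-wise continuous pieces glue correctly. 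This handles surjectivity and will also feed the injectivity argument.

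\textbf{Injectivity (the relations in $\Pi(X)$ are forced by those in the $\Pi(U)$).} This is the genuinely delicate part and, I expect, the main obstacle. One must show: if two words in arrows-of-the-$\Pi(U_i)$ become equal in $\Pi(X)$, they are already identified in $\colim{\Pi|_{\mathcal{O}}}$. Following Brown's proof, one takes a rel-homotopy $H$ in $X$ realizing the equality, subdivides the square as above so each cell maps into some $W\in\mathcal{O}$, and then argues cell by cell — reading off, for each small square, a relation that already holds in some $\Pi(W)$ — to rewrite one word into the other using only colimit relations. The technical care needed is exactly the same as in the topological case (keeping track of basepoints of subpaths lying in the appropriate intersections $U_i\cap U_j$, which are in $\mathcal{O}$ by the finite-intersection hypothesis, and invoking the pasting lemma to justify that the locally-defined homotopies across a cell boundary assemble into a genuine continuous homotopy in the ambient piece); no new phenomena appear because Proposition~\ref{lebesguetrick}, the Pasting Lemma, and the cartesian-closedness of $\Lim$ supply precisely the three ingredients the classical proof consumes. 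I would therefore organize the write-up as: (i) construct $\Phi$; (ii) prove the decomposition lemma for paths and for homotopies via Lebesgue number $+$ pasting; (iii) deduce surjectivity; (iv) run Brown's cell-by-cell rewriting for injectivity; (v) conclude $\Phi$ is an isomorphism, hence $\Pi(X)=\colim{\Pi|_{\mathcal{O}}}$. The cleanest exposition may well be to cite Brown~\cite{BrownBook} for the purely combinatorial bookkeeping in step~(iv) and emphasize only the points where "open cover" is replaced by "convergence system" and "Lebesgue's lemma for metric spaces" by Proposition~\ref{lebesguetrick}.
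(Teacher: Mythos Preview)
Your proposal is correct and follows essentially the same route as the paper: the paper also verifies the colimit statement by invoking Proposition~\ref{lebesguetrick} to subdivide paths (and, implicitly, homotopies) so that each piece lies in some $U\in\mathcal{O}$, and then defers the remaining combinatorial bookkeeping to the classical topological argument (citing Kammeyer rather than Brown). The only cosmetic difference is that the paper verifies the universal property of $\Pi(X)$ directly---constructing the mediating functor $\mathcal{F}\colon\Pi(X)\to\mathcal{G}$ for an arbitrary cocone---whereas you build the comparison map $\Phi\colon\colim{\Pi|_{\mathcal{O}}}\to\Pi(X)$ and show it is an isomorphism; these are equivalent packagings of the same argument.
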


\begin{proof}[Sketch of the proof]
We follow \cite{Kammeyer}. The idea is to show that $\apair{\Pi(U)\to\Pi(X)}_{U\in\mathcal{O}}$ is a universal cocone in the category of cocones of $\Pi|_{\mathcal{O}}$. As it is clearly a cocone, we consider any other cocone $\apair{\mathcal{F}_U\colon \Pi(U)\to\mathcal{G}}_{U\in\mathcal{O}}$, where $\mathcal{G}$ is a groupoid, and show that there exists a unique functor $\mathcal{F}\colon \Pi(X)\to\mathcal{G}$ such that

\[\begin{tikzcd}
	{\Pi(U)} && {\mathcal{G}} \\
	{\Pi(X)}
	\arrow["{\mathcal{F}_U}", from=1-1, to=1-3]
	\arrow[from=1-1, to=2-1]
	\arrow["{\mathcal{F}}"', from=2-1, to=1-3]
\end{tikzcd}\]
commutes for every $U\in\mathcal{O}$. 
On objects, we put $\mathcal{F}(x)\coloneqq \mathcal{F}_U(x)$ for any $U$ such that $x\in U$. Such a $U$ exists because the convergence on $X$ is centered, and it is well-defined since $\apair{\mathcal{F}_U}_{U\in\mathcal{O}}$ is a cocone and $\mathcal{O}$ is closed under finite intersections.

To define $\mathcal{F}([\gamma])$ for an arrow $[\gamma]\colon x\to y$, we use a Lebesgue-$\delta$ number for the convergence system $\mathcal{O}$ (cf. Proposition~\ref{lebesguetrick}). For sufficiently large $n$, the interval $I$ can be subdivided into $n$ subintervals of length less than $\delta$. Hence we can write $\gamma=\gamma_1*\dotso*\gamma_n$, where each $\gamma_i$ is a path in $X$ whose image is contained in some $U_i\in \mathcal{O}$. Finally, set $\mathcal{F}([\gamma])=\mathcal{F}_{U_n}([\gamma_n])\circ \dotso\circ \mathcal{F}_{U_1}([\gamma_1])$. The arguments that ensure $\mathcal{F}$ is well-defined and functorial are similar to those in the topological case.
\end{proof}

\begin{example}
Back to Example~\ref{vankampenex}, notice that for a point $x_0\in X$, the family $\mathcal{N}\coloneqq\{N\cup\{x_0\}:N\subseteq X$ and $N$ is countable$\}$ is a convergence system for $X$, closed under finite intersections, so $\Pi(X)=\operatorname{colim}_{N'\in\mathcal{N}}(\Pi(N'))$.
Now, as each $N'\in\mathcal{N}$ is countable, it follows easily that every continuous function $\gamma\colon I\to N'$ is constant, otherwise $\tau(\gamma)\colon I\to \tau(N')\subseteq \mathbb{R}$ would be a non-constant continuous function, which is absurd as $\tau(N')$ has $N'$ as its underlying set, which is too small to contain a non-degenerated open interval of the real line.
\end{example}

\section{Further remarks and comments}

The results presented in this work illustrate, to some extent, the broad scope of investigation within this program of ``extending" Algebraic Topology towards what might be called ``Algebraic Convergence''. A natural next step would be to extend or adapt results concerning universal coverings to convergence spaces. This was recently proposed by Treviño-Marroquín in~\cite{JTM}, though using the language of filters\footnote{Interestingly, the definition of connectedness used in~\cite{JTM} is equivalent to the one presented in~\cite{DM}, which essentially requires that the topologization of the convergence space be a connected topological space.}. Even more recently, Mili\'cevi\'c and Scoville~~\cite{MS} discussed how to further develope singular homology and higher homotopy groups in the category of pseudotopological spaces.

In this regard, we should also mention that the category $\Lim$ may be overly broad for further inquiries. On one hand, compactness is central to any considerations in Topology, and for this, the category of \emph{pseudotopological spaces} appears as a natural setting, being the most suitable environment for handling compactness (cf.~\cite{dol,DM}). On the other hand, as shown by Rieser in~\cite{riesernew}, pseudotopological suspensions coincide with their topological counterparts when applied to spheres, ensuring that many homotopy-theoretic results from topological spaces carry over to pseudotopological spaces. Thus, our choice of $\Lim$ in this work was motivated by its minimal hypotheses needed for our current purposes.

\begin{remark}
[On pseudotopologies and nets] Recall that a convergence space $\apair{X,L}$ is \emph{pseudotopological} if
\[L(\mathcal{F})=\bigcap_{\mathfrak{u}\in\beta(\mathcal{F})}L(\mathfrak{u})\]
holds for every proper filter $\mathcal{F}$ on $X$, where $\beta(\mathcal{F})$ stands for the set of ultrafilters on $X$ containing $\mathcal{F}$ (cf.~\cite{BB}). In this sense, ultrafilters can be easily replaced by \emph{ultranets}, which are precisely those nets whose induced filters are ultrafilters\footnote{Also called \emph{universal nets}. Equivalently, $\varphi\in\NETs{X}$ is an ultranet if, and only if, $\varphi$ is a subnet of every $\psi\in\NETs{X}$ which is a subnet of $\varphi$.} (cf.~\cite{schechter}). Alternatively, it is not hard to see that $\apair{X,L}$ is pseudotopological if and only for every net $\varphi\in\NETs{X}$ and every point $x\in X$, $\varphi\to_L x$ whenever every subnet $\psi$ of $\varphi$ has a further subnet $\rho$ such that $\rho\to_L x$, which is the exact translation of the topological fact motivating the definition of pseudotopologies to begin with (cf.~\cite[Section 5.1.4]{nel}).
\end{remark}

In another direction, one could argue that the generality of limit spaces  calls for a less restrictive notion of fundamental group(oid). With this in mind, and considering the alternatives presented by Kennison in~\cite{JK}, a natural question to do is the following: What is the correct generalization of a sheaf for limit spaces? Could this be used to \emph{describe} fundamental groups of limit spaces?

\section*{Acknowledgments}

We would like to thank Germán Ferrer, Peter Wong, and Weslem Silva for their insightful and helpful discussions on the subject. We also extend our gratitude to Antonio Rieser for his thoughtful observations.

\bibliographystyle{amsplain}

\bibliography{refs}{}


\end{document}